\numberwithin{equation}{section}
\newtheorem{theorem}{Theorem}[section]
\newtheorem{lemma}[theorem]{Lemma}
\newtheorem{question}[theorem]{Question}
\newtheorem{corollary}[theorem]{Corollary}
\newtheorem{fact}[theorem]{Fact}
\theoremstyle{definition}
\newtheorem{definition}[theorem]{Definition}
\DeclareMathOperator{\diam}{diam}
\DeclareMathOperator{\Var}{Var}
\begin{document}

\title{Restrictions of Brownian motion}

\author{Rich\'ard Balka}
\address{Department of Mathematics, University of Washington, Box 354350, Seattle, WA 98195-4350, USA
and Alfr\'ed R\'enyi Institute of Mathematics, Hungarian Academy of Sciences, PO Box 127, 1364 Budapest, Hungary}
\email{balka@math.washington.edu}

\author{Yuval Peres}

\address{Microsoft Research, 1 Microsoft Way, Redmond, WA 98052, USA}
\email{peres@microsoft.com}

\subjclass[2010]{Primary: 28A78, 60J65.}

\keywords{Brownian motion, Hausdorff dimension, bounded variation, H\"older continuous, restriction.}

\begin{abstract} Let $\{ B(t) \colon 0\leq t\leq 1\}$ be a linear Brownian motion and let $\dim$ denote the Hausdorff dimension.
Let $\alpha>\frac12$ and $1\leq \beta \leq 2$. We prove that, almost surely, there exists no set $A\subset[0,1]$ such that $\dim A>\frac12$
and $B\colon A\to\mathbb{R}$ is $\alpha$-H\"older continuous. The proof is an application of Kaufman's dimension doubling theorem.
As a corollary of the above theorem, we show that, almost surely, there exists
no set $A\subset[0,1]$ such that $\dim A>\frac{\beta}{2}$ and $B\colon A\to\mathbb{R}$ has finite $\beta$-variation.
The zero set of $B$ and a deterministic construction witness that the above theorems give the optimal dimensions.

\bigskip

\begin{center} { \scshape{RESTRICTIONS DU MOUVEMENT BROWNIEN}} \end{center}

\bigskip

\noindent {\scshape{R\'esum\'e}}. On note $\{B(t) \colon  0 \leq t  \leq 1 \}$ un mouvement brownien lin\'eaire et $\dim$ la dimension de Hausdorff.
Pour $\alpha> \frac 12$ et $1 \leq \beta \leq 2$ nous montrons que, presque s\^urement, il n'existe pas d'ensemble $A\subset  [0,1]$ tel que
$\dim A> \frac 12$ et $B\colon A \to \mathbb{R}$ soit $\alpha$-H\"older continue. La preuve est une application du th\'eor\`eme de Kaufman sur le doublement de dimension. Comme corollaire du th\'eor\`eme ci-dessus, nous montrons que, presque s\^urement, il n'existe
pas d'ensemble $A \subset [0,1]$ tel que $\dim A> \frac{\beta}{2}$ et $B\colon A\to\mathbb{R}$ ait une $\beta$-variation finie. L'ensemble des z\'eros de $B$ et une construction d\'eterministe montrent que que les th\'eor\`emes ci-dessus donnent les dimensions optimales.
\end{abstract}

\maketitle

\section{Introduction}

We examine how  large a set can be, on
which linear Brownian motion is $\alpha$-H\"older continuous for some $\alpha>\frac12$ or
has finite $\beta$-variation for some $1\leq \beta\leq 2$.
The main goal of the paper is to prove the following two theorems.

\begin{theorem} \label{t:Holder}
Let $\{ B(t)\colon 0\leq t\leq 1\}$ be a linear Brownian motion and assume that $\alpha>\frac12$.
Then, almost surely, there exists no set~$A\subset[0,1]$ with $\dim A>\frac12$
such that $B\colon A\to\mathbb{R}$ is $\alpha$-H\"older continuous.
\end{theorem}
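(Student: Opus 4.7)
The plan is to derive a contradiction by sandwiching $\dim B(A^*)$ between an upper bound coming from the $\alpha$-H\"older hypothesis and a lower bound coming from Kaufman's dimension doubling theorem, applied to a carefully chosen compact subset $A^* \subset A$ whose dimension lies strictly between $\frac12$ and $\alpha$.

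Suppose for contradiction that, on a positive-probability event, there is a set $A \subset [0,1]$ with $\dim A > \frac12$ on which $B$ is $\alpha$-H\"older. First, using Frostman's lemma, I replace $A$ by a compact subset that still has dimension exceeding $\frac12$, so I may assume $A$ itself is compact. Because $\alpha > \frac12$ and $\dim A > \frac12$, I can pick some $s \in (\frac12, \alpha)$ with $s < \dim A$; since $\mathcal H^s(A) = \infty$ for every such $s$, Davies' subset theorem yields a compact $A^* \subset A$ with $0 < \mathcal H^s(A^*) < \infty$, so in particular $\dim A^* = s$. The restriction $B|_{A^*}$ is still $\alpha$-H\"older, hence
$$\dim B(A^*) \le \frac{\dim A^*}{\alpha} = \frac{s}{\alpha} < 1.$$
On the other hand, Kaufman's dimension doubling theorem for linear Brownian motion asserts that, almost surely, every Borel set $E \subset [0,1]$ satisfies $\dim B(E) = \min(2 \dim E, 1)$; taking $E = A^*$ gives $\dim B(A^*) = \min(2s, 1) = 1$, since $s > \frac12$. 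This contradicts the displayed bound.

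The main obstacle is invoking the correct \emph{uniform} version of Kaufman's theorem, namely the equality $\dim B(E) = \min(2 \dim E, 1)$ holding simultaneously for every Borel $E \subset [0,1]$ on a single full-measure event, rather than only for each fixed $E$. Once that is in hand, the entire argument reduces to the elementary subset-extraction (Frostman/Davies) used to produce a compact $A^* \subset A$ with $\dim A^* \in (\frac12, \alpha)$, followed by a one-line comparison of the two dimension bounds above. The measurability of the random set $A$ is a minor concern that is dispatched by passing to compact (hence Borel) subsets before applying Kaufman's theorem.
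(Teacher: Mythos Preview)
Your argument hinges on a ``uniform Kaufman theorem for linear Brownian motion'', namely that almost surely $\dim B(E)=\min(2\dim E,1)$ for \emph{every} Borel $E\subset[0,1]$. No such theorem exists; in fact the statement is false. The zero set $\mathcal Z$ of $B$ has $\dim\mathcal Z=\tfrac12$ almost surely, so your formula would predict $\dim B(\mathcal Z)=1$, whereas $B(\mathcal Z)=\{0\}$. More to the point, for sets of dimension strictly greater than $\tfrac12$ the claim still fails: if $C\subset\mathbb R$ is a fixed compact set with $\dim C\in(0,1)$, then almost surely $E:=B^{-1}(C)\cap[0,1]$ satisfies $\dim E=\tfrac{1+\dim C}{2}>\tfrac12$, yet $B(E)\subset C$ has dimension at most $\dim C<1$. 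The sharp uniform lower bound in one dimension is McKean's inequality $\dim B(E)\ge 2\dim E-1$, which gives nothing useful here. Since your set $A^*$ is random (it depends on the path of $B$), you cannot fall back on the fixed-set version either. This is a genuine gap, and it is exactly the obstacle you flagged but did not overcome.

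The paper gets around this by introducing a second linear Brownian motion $W$ independent of $B$, so that $(B,W)$ is a planar Brownian motion, for which Kaufman's uniform dimension-doubling theorem \emph{does} hold. One then conditions on the path of $B$, passes to the closure of $A$ (which preserves $\alpha$-H\"older continuity and makes $A$ compact), and uses that the zero set $\mathcal Z_W$ of $W$ intersects any fixed compact set of dimension larger than $\tfrac12$ in a set of positive dimension with positive probability. On $A\cap\mathcal Z_W$ the image under $(B,W)$ is $B(A\cap\mathcal Z_W)\times\{0\}$, whose dimension is at most $\tfrac{1}{\alpha}\dim(A\cap\mathcal Z_W)<2\dim(A\cap\mathcal Z_W)$, contradicting dimension doubling for $(B,W)$. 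The auxiliary process $W$ is precisely the device that replaces the nonexistent uniform one-dimensional result you invoked.
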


Recall that for $A\subset [0,1]$ the \emph{$\beta$-variation} of a function $f\colon A\to \mathbb{R}$ is defined as
$$\Var^{\beta} (f)=\sup\left\{\sum_{i=1}^{n} |f(x_{i})-f(x_{i-1})|^{\beta}: x_0<\dots <x_n,~x_i\in A,~n\in \mathbb{N}^+\right\}.$$

\begin{theorem} \label{t:BV} Let $\{ B(t)\colon 0\leq t\leq 1\}$ be a linear Brownian motion and assume that $1 \leq \beta \leq 2$.
Then, almost surely, there exists no set $A\subset[0,1]$ with $\dim A>\frac{\beta}{2}$
such that $B\colon A\to\mathbb{R}$ has finite $\beta$-variation. In particular,
$$\mathbb{P}(\exists A: \dim A> \textstyle \frac 12 \textrm{ and } B|_{A} \textrm{ is increasing})=0.$$
\end{theorem}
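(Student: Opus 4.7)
The plan is to reduce Theorem~\ref{t:BV} to Theorem~\ref{t:Holder} via a deterministic decomposition of $A$ into pieces on which $B$ is H\"older. Since $\dim A\leq 1$ for $A\subset[0,1]$, the case $\beta=2$ is vacuous, so I will assume $1\leq\beta<2$ and argue by contradiction: suppose some $A\subset[0,1]$ satisfies $\dim A>\beta/2$ and $V_0:=\Var^\beta(B|_A)<\infty$. Fix $\alpha\in(1/2,1/\beta)$ (possible since $\beta<2$) and set $\gamma=\alpha\beta\in(\beta/2,1)$. The central tool will be the variation function
$$V(x):=\Var^\beta(B|_{A\cap[0,x]}),$$
which is monotone nondecreasing on $A$, bounded by $V_0$, and satisfies the pointwise estimate $|B(y)-B(x)|^\beta\leq|V(y)-V(x)|$ for all $x,y\in A$.

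The main step will be the decomposition $A=E\cup\bigcup_{n,K\in\mathbb{N}^+}A_{n,K}$, where
$$A_{n,K}:=\{x\in A:|V(y)-V(x)|\leq K|y-x|^\gamma\text{ for all }y\in A\text{ with }|y-x|<1/n\}$$
and $E:=A\setminus\bigcup_{n,K}A_{n,K}$. For $x,y\in A_{n,K}$ with $|y-x|<1/n$, the pointwise estimate combined with $\gamma=\alpha\beta$ yields $|B(y)-B(x)|\leq K^{1/\beta}|y-x|^\alpha$; pairs at distance $\geq 1/n$ are controlled by boundedness of $B$ on $[0,1]$. Hence $B|_{A_{n,K}}$ is $\alpha$-H\"older, and Theorem~\ref{t:Holder} forces $\dim A_{n,K}\leq 1/2$ almost surely, for every $n,K$.

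It remains to bound $\dim E$. By definition, each $x\in E$ admits, for any $K$, a point $y\in A$ arbitrarily close with $|V(y)-V(x)|>K|y-x|^\gamma$. Applying Vitali's covering lemma to the intervals $[\min(x,y),\max(x,y)]$ produced by these pairs extracts a disjoint subfamily $\{[a_i,b_i]\}$ whose bounded-factor enlargements still cover $E$; the monotonicity of $V$ (extended monotonically to $[0,1]$) then yields the telescoping bound $\sum_i(V(b_i)-V(a_i))\leq V_0$, whence $\sum_i(b_i-a_i)^\gamma\leq V_0/K$. Letting $K\to\infty$ gives $\mathcal H^\gamma(E)=0$, so $\dim E\leq\alpha\beta$. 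Combining, $\dim A\leq\max(1/2,\alpha\beta)=\alpha\beta$ almost surely; letting $\alpha\searrow 1/2$ contradicts $\dim A>\beta/2$. The final ``in particular'' clause is immediate from the $\beta=1$ case of what has just been proved, since any monotone $B|_A$ satisfies $\Var^1(B|_A)\leq\sup_{[0,1]}B-\inf_{[0,1]}B<\infty$ by continuity. I expect the main delicacy to be executing the Vitali covering cleanly when $A$ is possibly Lebesgue-null, but the monotonicity of the extended $V$ on $[0,1]$ reduces it to a standard estimate.
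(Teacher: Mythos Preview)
Your argument is correct and follows essentially the same strategy as the paper: decompose $A$ into countably many pieces on which $B$ is $\alpha$-H\"older (each then of dimension at most $\tfrac12$ by Theorem~\ref{t:Holder}) together with an exceptional set of dimension at most $\alpha\beta$, the latter bounded via a covering lemma and the finiteness of $\Var^\beta(B|_A)$. The paper isolates this decomposition as Lemma~\ref{l:BV}, working directly with the increments of $f=B$ and invoking Besicovitch's covering theorem, whereas you route the argument through the monotone variation function $V$ and a Vitali $5r$-lemma; your telescoping bound $\sum_i(V(b_i)-V(a_i))\leq V_0$ over disjoint intervals with endpoints in $A$ is exactly the paper's estimate $\sum|f(x+t_x)-f(x)|^\beta\leq\Var^\beta(f)$ in a different dress, so the two proofs are minor variants of one another.
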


Clearly, the above theorems hold simultaneously for a countable dense set of parameters $\alpha,\beta$, thus
simultaneously for all $\alpha,\beta$. Let $\mathcal{Z}$ be the zero set of a linear Brownian motion $B$.
Then, almost surely, $\dim \mathcal{Z}=\frac 12$ and $B|_{\mathcal{Z}}$ is $\alpha$-H\"older continuous for all $\alpha>\frac 12$, so Theorem~\ref{t:Holder} gives the optimal dimension. We prove also that Theorem~\ref{t:BV} is best possible, see Theorem~\ref{t:opt}.

\subsection*{Motivation and related results}
Let $C[0,1]$ denote the set of continuous functions $f\colon [0,1]\to \mathbb{R}$ endowed with the maximum norm.
Elekes proved the following restriction theorem.

\begin{theorem}[Elekes \cite{E}] Let $0<\alpha<1$. For the generic continuous function $f\in C[0,1]$ (in the sense of Baire category)
\begin{enumerate}[(1)]
\item  for all $A\subset [0,1]$, if $f|_{A}$ is $\alpha$-H\"older continuous, then $\dim A\leq 1-\alpha$;
\item for  all $A\subset [0,1]$, if $f|_{A}$ is of bounded variation, then $\dim A\leq \frac 12$.
\end{enumerate}
\end{theorem}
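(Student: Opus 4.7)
The plan is to use a standard Baire-category scheme: write the exceptional set as a countable union of closed nowhere-dense sets, and construct explicit rough perturbations. I will concentrate on (1); statement (2) follows the same template with a different choice of perturbation.

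For each $s\in\mathbb{Q}\cap(1-\alpha,1]$, each $c\in\mathbb{N}$, and each $k\in\mathbb{N}$, let
\[
F_{c,s,k}=\bigl\{f\in C[0,1] : \exists\ \mathrm{compact}\ A\subset[0,1]\ \mathrm{with}\ \mathcal{H}^{s}_{\infty}(A)\geq 1/k\ \mathrm{and}\ f|_{A}\ c\text{-}\alpha\text{-H\"older}\bigr\},
\]
where $\mathcal{H}^{s}_{\infty}$ denotes the $s$-dimensional Hausdorff content. Any $f$ satisfying the negation of (1) lies in some $F_{c,s,k}$, so it suffices to show each $F_{c,s,k}$ is closed and nowhere dense. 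Closedness is routine: given $f_{n}\to f$ uniformly with witnesses $A_{n}$, pass to a subsequence with $A_{n}\to A$ in Hausdorff metric; the $c$-$\alpha$-H\"older condition survives the limit on $A$, and $\mathcal{H}^{s}_{\infty}$ is upper semicontinuous on compact sets in Hausdorff metric, so $\mathcal{H}^{s}_{\infty}(A)\geq 1/k$.

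For nowhere density, fix $g\in C[0,1]$ and $\varepsilon>0$; I would build a perturbation $g'=g+\varepsilon W$ whose every restriction to a set of positive $s$-content carries too much oscillation to be $c$-$\alpha$-H\"older. Take $W$ to be a normalised Weierstrass--Takagi-type function $W(t)=\sum_{n\geq 1} n\,2^{-\alpha n}\phi(2^{n}t)$ for a suitable sawtooth $\phi$, so that on every dyadic interval $I$ of length $2^{-n}$ one has a disjoint family of at least $\lfloor c_{0}\,2^{(1-s)n}\rfloor$ subintervals on which $W$ executes oscillations of amplitude $\gtrsim n\cdot 2^{-\alpha n}$. Choose a sup-ball around $g'$ of radius $\delta$ so small that every $f$ in it inherits these oscillations. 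If $f|_{A}$ were $c$-$\alpha$-H\"older, then on each such oscillation subinterval $A$ must miss either the ``top'' or the ``bottom'' half of the oscillation, so $A$ omits a family of intervals whose complementary $s$-content drops below $1/k$ as soon as $n$ is chosen large relative to $c,s,k$.

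The main obstacle is the geometric counting in the previous paragraph: one needs to ensure $W$ is rough at \emph{every} location, not merely almost everywhere, since $A$ is arbitrary and may concentrate on exceptional points. A self-affine explicit construction with enough branches per level supplies this, but tuning the number of branches, the amplitudes, and the exponent so that the resulting content bound separates $1-\alpha$ from $s$ is the delicate arithmetic. For (2), one replaces $W$ by a perturbation arranged so that each dyadic scale $2^{-n}$ produces $\sim 2^{n/2}$ disjoint local max--min pairs of $W$-jump $\gtrsim 2^{-n/2}$; a monotone (and hence, up to decomposition, any BV) restriction can capture at most one point from each pair, so the Hausdorff $\tfrac{1}{2}$-content of $A$ is forced to be arbitrarily small. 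The increasing case mentioned in Theorem~\ref{t:BV} falls out as the $\beta=1$ instance.
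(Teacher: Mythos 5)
First, a point of order: the paper does not prove this statement. It is Elekes's theorem, quoted from \cite{E} purely as motivation, so there is no in-paper proof to compare yours against; I can only assess your argument on its own terms.

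Your Baire-category skeleton is sound: since a $c$-$\alpha$-H\"older restriction extends to the closure of $A$, one may take $A$ compact, the decomposition into the sets $F_{c,s,k}$ covers the exceptional set, and closedness follows from Blaschke selection together with the upper semicontinuity of $\mathcal{H}^{s}_{\infty}$ under Hausdorff convergence, exactly as you say. The nowhere-density step, however, contains the actual content of the theorem and has a genuine gap. A minor issue first: before adding $\varepsilon W$ you must replace $g$ by a piecewise linear (or otherwise Lipschitz) approximation; for a general continuous $g$ the oscillation at scale $2^{-n}$ need not be $o(2^{-\alpha n})$ and can cancel the oscillations of $\varepsilon W$, so the perturbed function need not inherit the lower bounds you rely on. The serious issue is the counting: the inference ``on each oscillation subinterval $A$ must miss the top or the bottom half, hence the complementary $s$-content drops below $1/k$ for $n$ large'' does not follow. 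Missing one of two designated subintervals inside each oscillation window removes only a bounded proportion of each dyadic interval at that scale; the surviving set still contains, in each dyadic interval $I$, an interval of length comparable to $|I|$, so its $\mathcal{H}^{s}_{\infty}$ stays bounded away from $0$ for every $s\leq 1$, no matter how large $n$ is. Iterating over many scales shrinks Lebesgue measure geometrically but is perfectly compatible with full Hausdorff dimension, so ``remove a fixed proportion per scale'' can never push $\mathcal{H}^{s}_{\infty}(A)$ below $1/k$ for $s>1-\alpha$. The correct mechanism must exploit the full ratio between the oscillation amplitude (of order $\varepsilon n 2^{-\alpha n}$ at scale $2^{-n}$) and the spread $c\,2^{-\alpha n}$ that the H\"older condition permits for $f(A\cap I)$, so as to confine $A\cap I$ to a set of very small relative $s$-content simultaneously at every scale; making this quantitative, uniformly in the location of $A$ and for all rational $s\in(1-\alpha,1]$, is exactly the step you label ``delicate arithmetic'' and defer. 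That step is the proof, not a tuning detail, so the proposal as it stands is a plausible plan rather than a proof. Part (2) is sketchier still and inherits the same difficulty: missing ``one point from each max--min pair'' at a single scale constrains neither the measure nor the content of $A$.
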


The above theorem is sharp, the following result was proved by Kahane and Katznelson,
and M\'ath\'e independently, by different methods.

\begin{theorem}[Kahane and Katznelson \cite{KK}, M\'ath\'e \cite{Ma}] \label{t:KKM} Let $0<\alpha<1$. For any $f\in C[0,1]$ there are compact sets
$A,D\subset [0,1]$ such that
\begin{enumerate}[(1)]
\item $\dim A=1-\alpha$ and $f|_{A}$ is $\alpha$-H\"older continuous;
\item $\dim D=\frac 12$ and $f|_{D}$ is of bounded variation.
\end{enumerate}
\end{theorem}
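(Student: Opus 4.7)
\textbf{Proof plan for Theorem~\ref{t:KKM}.} For both parts I would construct the desired compact set as a Cantor-like intersection $\bigcap_n K_n$, where each $K_n$ is a disjoint union of closed intervals obtained by pruning $K_{n-1}$ according to information about $f$. The two parts differ in how the pruning is performed.

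\emph{Part (1).} Fix a small $\ell \in (0,1)$ and a branching factor $m \approx \ell^{-(1-\alpha)}$. Maintain the inductive invariant that each of the $m^n$ components of $K_n$ is a closed interval of length $\ell^n$ on which the oscillation of $f$ is at most $C\ell^{n\alpha}$. Given such a component $I$ of $K_n$, partition $I$ into $\lceil 1/\ell \rceil$ consecutive sub-intervals of length $\ell^{n+1}$, and partition the range $f(I)$ into $O(\ell^{-\alpha})$ value-bins of height $C\ell^{(n+1)\alpha}$. A pigeonhole argument then selects $m$ sub-intervals whose midpoint $f$-values all lie in a common value-bin. Provided $\ell$ was chosen small enough at that stage that $\omega_f(\ell^{n+1}) \ll \ell^{(n+1)\alpha}$, the whole image of each selected sub-interval lies in a slightly enlarged bin, re-establishing the invariant. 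The mass-distribution principle applied to the natural uniform measure gives $\dim \bigcap K_n \geq \log m / \log(1/\ell)$, which can be driven to $1-\alpha$ by a suitable choice of subsequence of scales; the per-level oscillation estimate yields $|f(x)-f(y)| \leq C|x-y|^\alpha$ on the limit set by a telescoping argument across the scales separating $x$ and $y$.

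\emph{Part (2).} Here I would use an Erd\H os--Szekeres recursion. Fix a large integer $N$ (large compared to the modulus of continuity of $f$ at each scale). Given a component $I$ of $K_n$ of length $\ell_n = N^{-n}$, sample the midpoints $t_1<\cdots<t_N$ of its $N$ sub-intervals of length $\ell_{n+1}$. By Erd\H os--Szekeres the sequence $f(t_1),\ldots,f(t_N)$ contains a monotone subsequence of length at least $\lceil\sqrt{N}\,\rceil$; keep the corresponding sub-intervals, and then shrink each symmetrically about its midpoint to a closed neighborhood on which $f$ stays strictly between the $f$-values of its two neighbors in the selected subsequence. Continuity of $f$ makes this shrinking possible, and a sufficiently large $N$ keeps the shrunken intervals comparable in length to $\ell_{n+1}$. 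The resulting set has branching factor $\geq \sqrt{N}$ at scale $\approx N^{-(n+1)}$, so its Hausdorff dimension is $\geq 1/2$, and by construction $f$ is strictly monotone on it, yielding $\mathrm{Var}^1(f|_{\bigcap K_n}) \leq \mathrm{osc}(f,[0,1])<\infty$.

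\textbf{Main obstacle.} In both parts the tension is between the branching needed to realize the target dimension and the shrinking needed to make the invariant (oscillation control in Part (1), monotonicity transfer in Part (2)) survive each inductive step. The resolution is to pick the scale at stage $n$ adaptively, taking it small relative to the modulus of continuity of $f$ at that level, and to verify that the resulting non-uniform Cantor set still satisfies the claimed dimension bound via the mass-distribution or energy method.
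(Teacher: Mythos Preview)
The paper does not prove Theorem~\ref{t:KKM}; it is quoted from Kahane--Katznelson \cite{KK} and M\'ath\'e \cite{Ma} with no argument supplied, so there is no proof in the present paper against which to compare your proposal.

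That said, your outline is broadly in the spirit of the original sources. The pigeonhole-on-value-bins Cantor construction you describe for part~(1) is essentially the Kahane--Katznelson argument, and an Erd\H os--Szekeres recursion is a natural route to part~(2). One point in part~(2) deserves tightening: the monotone subsequence produced by Erd\H os--Szekeres may contain ties, so you cannot always shrink a kept sub-interval so that $f$ on it lies \emph{strictly} between the $f$-values at the neighboring selected midpoints (if three consecutive midpoint values coincide, such a shrink would force a single point). The fix is to relax strictness: allow the oscillation of $f$ on each kept sub-interval at stage $n$ to be at most some $\varepsilon_n$ with $\sum_n 2^{\,n/2}\varepsilon_n<\infty$ (or control it level by level against the gaps in the monotone sequence plus a summable slack), choosing the shrunken length via the modulus of continuity of $f$. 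This preserves bounded variation on the limit set without requiring strict monotonicity at the sample points, and your adaptive-scale remark in the ``Main obstacle'' paragraph is exactly where this correction lives.
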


Kahane and Katznelson also considered H\"older continuous functions.

\begin{definition} For $A\subset [0,1]$ let $C^{\alpha}(A)$ and $BV(A)$ denote the set of functions $f\colon A\to \mathbb{R}$ that are
$\alpha$-H\"older continuous and of bounded variation, respectively. For all $0<\alpha<\beta<1$ define
\begin{align*} H(\alpha,\beta)&=\sup\{\gamma: \forall \, f\in C^{\alpha}[0,1] \, \exists A\subset [0,1] \textrm{ s.t. }
\dim A=\gamma \textrm{ and } f|_{A}\in C^{\beta}(A) \}, \\
V(\alpha)&=\sup\{\gamma: \forall \, f\in C^{\alpha}[0,1] \, \exists A\subset [0,1] \textrm{ s.t. }
\dim A=\gamma \textrm{ and } f|_{A}\in BV(A)\}.
\end{align*}
\end{definition}

\begin{theorem}[Kahane and Katznelson \cite{KK}] For all $0<\alpha<\beta<1$ we have
$$H(\alpha,\beta)\leq \frac{1-\beta}{1-\alpha} \quad \textrm{and} \quad V(\alpha)\leq \frac{1}{2-\alpha}.$$
\end{theorem}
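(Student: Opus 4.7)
The plan is to prove both bounds by exhibiting, for each $\alpha \in (0,1)$, a single extremal $\alpha$-H\"older function $f_0 \in C^\alpha[0,1]$ that obstructs the existence of large-dimensional sets $A$ on which $f_0|_A$ enjoys either $\beta$-H\"older ($\beta > \alpha$) or bounded-variation behaviour. A natural candidate is the Weierstrass-type lacunary series $f_0(x) = \sum_{n\geq 0} 2^{-n\alpha}\phi(2^n x)$, with $\phi$ a fixed Lipschitz sawtooth of period $1$; this lies in $C^\alpha[0,1]$ and has oscillation comparable to $2^{-n\alpha}$ on every dyadic interval $I$ of length $2^{-n}$. In both arguments one lets $N_n$ denote the number of dyadic intervals of length $2^{-n}$ meeting $A$ and aims for $N_n \leq C\cdot 2^{nd}$ with $d$ the target exponent, which gives $\overline{\dim}_B A \leq d$ and a fortiori $\dim A \leq d$.

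For the H\"older bound, I would argue by a two-scale comparison. Fix $n$ and let $I$ be a dyadic interval of length $2^{-n}$ meeting $A$; then $f_0(A \cap I)$ sits in a window $W_I \subset f_0(I)$ of length at most $L\cdot 2^{-n\beta}$, where $L$ is the $C^\beta$-constant of $f_0|_A$. At the companion finer scale $2^{-m}$ with $m\alpha = n\beta$, each sub-dyadic $I' \subset I$ has image of diameter $\asymp 2^{-n\beta}$, matching the size of $W_I$. Provided the images $\{f_0(I')\}_{I' \subset I}$ cover $f_0(I)$ with bounded multiplicity, only a bounded number of them can intersect $W_I$, giving a recursion $N_m \lesssim N_n$; iterating this comparison and balancing the geometric growth of the count against the scale jump $n \mapsto n\beta/\alpha$ should reproduce the exponent $d = (1-\beta)/(1-\alpha)$.

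For the BV bound, I would use a dyadic oscillation decomposition $f_0 = \sum_n \Delta_n f_0$, with $\|\Delta_n f_0\|_\infty \asymp 2^{-n\alpha}$. The variation of $f_0|_A$ is then bounded below by summing scale-$n$ contributions, each expressed as a sum over pairs of $A$-points at distance $\sim 2^{-n}$ witnessing a sign change of $\Delta_n f_0$; a pigeonhole/uniform-distribution estimate controls this pair count in terms of $N_n$ and $2^n$, and a careful optimization across scales is expected to yield $N_n \lesssim 2^{n/(2-\alpha)}$ and hence $\dim A \leq 1/(2-\alpha)$. The main obstacle I anticipate is the bounded-multiplicity spreading of $\{f_0(I')\}_{I' \subset I}$ invoked in the H\"older step: the naive Weierstrass series does not manifestly have this property, so a bespoke self-similar dyadic construction in which the scattering of sub-interval images is engineered from the start may be necessary to make the counting argument rigorous.
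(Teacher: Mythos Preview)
The paper does not prove this theorem: it is quoted as a result of Kahane and Katznelson with a bare citation to \cite{KK}, and no argument is given or even sketched. There is therefore nothing in the paper to compare your proposal against; any verification would have to be made against the original article \cite{KK}.

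On the substance of your sketch: the overall strategy --- build a single extremal $f_0\in C^\alpha[0,1]$ with oscillation $\asymp 2^{-n\alpha}$ on every dyadic interval of length $2^{-n}$, and then count dyadic boxes meeting $A$ --- is indeed the approach of Kahane and Katznelson, and your instinct that the naive Weierstrass series may not suffice is correct: they work with a carefully designed self-similar construction precisely so that the images of sub-intervals spread with controlled overlap. However, your two-scale recursion for the H\"older bound is not yet an argument. From ``$N_m\lesssim N_n$ whenever $m\alpha=n\beta$'' you cannot extract the exponent $(1-\beta)/(1-\alpha)$ without an additional ingredient: between the comparison scales the count $N_k$ can still grow like $2^{k}$, and iterating the relation along the geometric sequence $n_k=n_0(\beta/\alpha)^k$ leaves exponentially long uncontrolled stretches. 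The actual argument in \cite{KK} is more delicate and tracks how the $\beta$-H\"older constraint propagates through \emph{every} dyadic scale, not just a sparse subsequence. Your BV sketch is vaguer still; the ``pigeonhole/uniform-distribution estimate'' you allude to is the heart of the matter and is not supplied. So while the direction is right, what you have written is a plausible outline rather than a proof, and the paper under review offers no help in completing it.
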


\begin{question}[Kahane and Katznelson \cite{KK}] \label{q:KK} Is the above result best possible?
\end{question}

As the linear Brownian motion $B$ is $\alpha$-H\"older continuous for all $\alpha< \frac 12$,
our results and Theorem~\ref{t:KKM} imply the following corollary.

\begin{corollary} For all $0<\alpha<\frac 12<\beta<1$ we have
$$H(\alpha,\beta)\leq \frac 12 \quad \textrm{and} \quad V(\alpha)=\frac 12.$$
\end{corollary}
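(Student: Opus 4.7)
The plan is to obtain both bounds by plugging a sample path of Brownian motion into the definitions, using the main theorems of the paper to block restrictions of large dimension, and using Theorem~\ref{t:KKM} to realize dimension $\tfrac12$ from above.

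For the bound $H(\alpha,\beta)\leq\tfrac12$, fix $\alpha\in(0,\tfrac12)$ and $\beta\in(\tfrac12,1)$. Since linear Brownian motion is almost surely $\alpha$-H\"older continuous on $[0,1]$ for every $\alpha<\tfrac12$, I can select a sample path $f=B(\cdot)$ that lies in $C^{\alpha}[0,1]$ and simultaneously satisfies the almost sure conclusion of Theorem~\ref{t:Holder} with parameter $\beta$. For this $f$, no $A\subset[0,1]$ with $\dim A>\tfrac12$ has $f|_A\in C^{\beta}(A)$, so by the definition of $H(\alpha,\beta)$ the supremum cannot exceed $\tfrac12$.

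For the equality $V(\alpha)=\tfrac12$, the upper bound is analogous: take a sample path $f=B(\cdot)$ that belongs to $C^{\alpha}[0,1]$ and satisfies the almost sure conclusion of Theorem~\ref{t:BV} with $\beta=1$, so that no $A$ with $\dim A>\tfrac12$ admits a restriction of bounded variation. This shows $V(\alpha)\leq\tfrac12$. The lower bound $V(\alpha)\geq\tfrac12$ is immediate from Theorem~\ref{t:KKM}(2): applied to any $f\in C^{\alpha}[0,1]\subset C[0,1]$ it produces a compact $D\subset[0,1]$ with $\dim D=\tfrac12$ on which $f$ has bounded variation, and this witnesses $\gamma=\tfrac12$ in the supremum defining $V(\alpha)$.

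There is no serious obstacle here; the only point deserving care is that Theorems~\ref{t:Holder} and \ref{t:BV} provide almost sure statements for a fixed exponent, while the definitions of $H(\alpha,\beta)$ and $V(\alpha)$ demand the existence of a \emph{single} function with the prescribed failure property. This is handled by the observation already made in the paper just after Theorem~\ref{t:BV}: the two theorems hold simultaneously for a countable dense set of parameters, hence simultaneously for all $\alpha,\beta$, so on an event of full measure one can extract a deterministic path realizing the required counterexample for every pair $(\alpha,\beta)$ at once.
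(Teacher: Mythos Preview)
Your proof is correct and follows exactly the approach the paper sketches in the sentence preceding the corollary: a sample path of Brownian motion lies in $C^{\alpha}[0,1]$ for every $\alpha<\tfrac12$, Theorems~\ref{t:Holder} and~\ref{t:BV} rule out restrictions of dimension larger than $\tfrac12$, and Theorem~\ref{t:KKM}(2) supplies the lower bound $V(\alpha)\ge\tfrac12$. Your final paragraph is slightly more cautious than necessary---since the corollary is stated for fixed $\alpha,\beta$, one only needs the intersection of two full-measure events to be nonempty, and the simultaneous-for-all-parameters remark is not strictly required---but this does not affect correctness.
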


Related results in the discrete setting can be found in \cite{ABP}.

\begin{definition}
Let $d\geq 2$ and $f\colon [0,1] \to \mathbb{R}^d$. We say that $f$ is \emph{increasing} on a set $A\subset [0,1]$
if all the coordinate functions of $f|_{A}$ are non-decreasing.
\end{definition}

\begin{question} Let $d\geq 2$ and let $\{ B(t)\colon 0\leq t\leq 1\}$ be a standard $d$-dimensional Brownian motion. What is the maximal
number $\gamma$ such that, almost surely, $B$ is increasing on some set of Hausdorff dimension $\gamma$?
\end{question}

\section{Preliminaries}

The diameter of a metric space $X$ is denoted by $\diam X$. For all $s \ge 0$ the \emph{$s$-dimensional Hausdorff measure} of $X$ is defined as
\begin{align*}
\mathcal{H}^{s}(X)&=\lim_{\delta\to 0+}\mathcal{H}^{s}_{\delta}(X)
\mbox{, where}\\
\mathcal{H}^{s}_{\delta}(X)&=\inf \left\{ \sum_{i=1}^\infty (\diam
X_{i})^{s}: X \subset \bigcup_{i=1}^{\infty} X_{i},~
\forall i \diam X_i \le \delta \right\}.
\end{align*}
The \emph{Hausdorff dimension} of $X$ is defined as
$$\dim X = \inf\{s \ge 0: \mathcal{H}^{s}(X) <\infty\}.$$
Let $A\subset \mathbb{R}$ and $\alpha>0$. A function $f\colon A\to \mathbb{R}$ is called
\emph{$\alpha$-H\"older continuous} if there exists a constant $c\in (0,\infty)$
such that $|f(x)-f(y)|\leq c|x-y|^{\alpha}$ for all $x,y\in A$.

\begin{fact} \label{f:H} If $f\colon A\to \mathbb{R}$ is $\alpha$-H\"older continuous, then $\dim f(A)\leq \frac{1}{\alpha} \dim A.$
\end{fact}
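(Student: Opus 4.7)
The plan is to push an arbitrary economical cover of $A$ forward by $f$ and exploit the way $\alpha$-H\"older continuity transforms diameters. Concretely, fix any $s>\dim A$, so that $\mathcal{H}^{s}(A)<\infty$, and let $c$ be a H\"older constant for $f$.

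For $\delta>0$, take any cover $\{U_i\}_{i=1}^\infty$ of $A$ with $\diam U_i\le\delta$ for all $i$. Setting $V_i=f(U_i\cap A)$, the family $\{V_i\}$ covers $f(A)$, and the H\"older bound gives $\diam V_i\le c(\diam U_i)^{\alpha}\le c\delta^{\alpha}$. Consequently
\[
\sum_{i=1}^{\infty}(\diam V_i)^{s/\alpha}\le c^{s/\alpha}\sum_{i=1}^{\infty}(\diam U_i)^{s}.
\]
Taking the infimum over all such covers of $A$ yields $\mathcal{H}^{s/\alpha}_{c\delta^{\alpha}}(f(A))\le c^{s/\alpha}\,\mathcal{H}^{s}_{\delta}(A)$, and then letting $\delta\to 0{+}$ gives $\mathcal{H}^{s/\alpha}(f(A))\le c^{s/\alpha}\,\mathcal{H}^{s}(A)<\infty$, so $\dim f(A)\le s/\alpha$.

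Finally, letting $s\downarrow\dim A$ produces $\dim f(A)\le\frac{1}{\alpha}\dim A$, as desired. There is really no difficulty here beyond bookkeeping; the whole argument rests on the elementary observation that $\alpha$-H\"older continuity scales diameters by an exponent of $\alpha$, which exactly translates into the rescaling $s\mapsto s/\alpha$ of the dimension exponent in the definition of $\mathcal{H}^{s}_{\delta}$.
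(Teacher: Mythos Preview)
Your argument is correct and is the standard proof of this classical fact. The paper itself does not prove Fact~\ref{f:H}; it is simply recorded in the Preliminaries as a well-known statement, so there is no proof in the paper to compare against.
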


\section{H\"older restrictions}

The goal of this section is to prove Theorem~\ref{t:Holder}. First we need some preparation.

\begin{definition} A function $g\colon [0,1]\to \mathbb{R}^2$ is called \emph{dimension doubling} if
$$\dim g(A)=2\dim A \quad \textrm{for all } A\subset [0,1].$$
\end{definition}

\begin{theorem}[Kaufman, \cite{K1}, see also \cite{MP}] The two-dimensional Brownian motion is almost surely dimension doubling.
\end{theorem}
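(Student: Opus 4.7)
The plan is to establish both $\dim g(A)\leq 2\dim A$ and $\dim g(A)\geq 2\dim A$ for every $A\subset[0,1]$ on a single almost-sure event. The upper bound is the soft half: by Kolmogorov's continuity criterion applied to each coordinate, $g$ is almost surely $\alpha$-H\"older continuous on $[0,1]$ for every $\alpha<\tfrac12$, and on that full-measure event the planar analogue of Fact~\ref{f:H} (same proof, with balls in place of intervals) yields $\dim g(A)\leq(1/\alpha)\dim A$ simultaneously for every $A\subset[0,1]$; letting $\alpha\nearrow\tfrac12$ through rationals gives $\dim g(A)\leq 2\dim A$.

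For the lower bound I would use the potential-theoretic (energy) method. Recall that if a Borel probability measure $\nu$ on $\mathbb{R}^2$ has finite $s$-energy $I_s(\nu)=\iint|x-y|^{-s}\,d\nu(x)\,d\nu(y)$, then $\dim\mathrm{supp}(\nu)\geq s$, and conversely Frostman's lemma produces, for each analytic $A\subset[0,1]$ with $\dim A>\alpha$, a Borel probability $\mu$ on $A$ with $\mu([x,x+r])\leq Cr^\alpha$ and hence $I_\alpha(\mu)<\infty$. Fix rationals $\alpha\in(0,1)$ and $\epsilon\in(0,\alpha)$. Since $g(t)-g(s)$ is planar Gaussian with covariance $|t-s|I_2$ and the norm of a standard planar Gaussian has finite $(-\beta)$-moment exactly when $\beta<2$, a polar-coordinate computation gives
\begin{equation*}
\mathbb{E}\bigl[|g(t)-g(s)|^{-\beta}\bigr]=C_\beta|t-s|^{-\beta/2}\qquad(\beta<2).
\end{equation*}
Fubini with $\beta=2\alpha-\epsilon$ then yields $\mathbb{E}[I_{2\alpha-\epsilon}(g_*\mu)]=C\,I_{\alpha-\epsilon/2}(\mu)<\infty$, so almost surely $I_{2\alpha-\epsilon}(g_*\mu)<\infty$, whence $\dim g(A)\geq\dim\mathrm{supp}(g_*\mu)\geq 2\alpha-\epsilon$ for this single $A$.

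The hard part is uniformization: the previous argument excludes a null set depending on the random choice $\mu=\mu_A$, whereas the theorem demands one exceptional null set that works simultaneously for every $A$. Kaufman's device is to pass to Fourier transforms. The same Gaussian computation gives
\begin{equation*}
\mathbb{E}\bigl|\widehat{g_*\mu}(\xi)\bigr|^2=\iint e^{-|\xi|^2|s-t|/2}\,d\mu(s)\,d\mu(t)\lesssim|\xi|^{-2\alpha}
\end{equation*}
for every Frostman measure $\mu$ of exponent $\alpha$. A Borel--Cantelli argument along dyadic $\xi$, combined with a weak-$*$ approximation of general Frostman measures by a countable family of simple test measures (e.g.\ rationally weighted point masses on dyadic points), promotes this single-$\mu$ bound to an almost-sure polynomial decay of $\widehat{g_*\mu}$ that holds uniformly over all such $\mu$. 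Intersecting the resulting events over rational $\alpha,\epsilon$ and then, for each $A$, letting $\alpha\nearrow\dim A$ and $\epsilon\searrow 0$ through rationals yields $\dim g(A)\geq 2\dim A$ for all $A$ on a single full-measure event. This passage from a single-measure bound to a uniform-in-$\mu$ one is the principal technical step.
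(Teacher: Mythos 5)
The paper does not prove this theorem; it quotes it from Kaufman \cite{K1} and \cite{MP}, so your sketch has to be measured against the standard proof there. Your upper bound ($\dim g(A)\le 2\dim A$ via H\"older continuity) and your single-set lower bound (energy method plus the computation $\mathbb{E}|g(t)-g(s)|^{-\beta}=C_\beta|t-s|^{-\beta/2}$ for $\beta<2$) are both correct, and you have correctly identified that the entire content of the theorem is the uniformization over all $A$. But the uniformization step you propose has a genuine gap. The second-moment bound $\mathbb{E}|\widehat{g_*\mu}(\xi)|^2=\iint e^{-|\xi|^2|s-t|/2}\,d\mu(s)\,d\mu(t)\lesssim|\xi|^{-2\alpha}$ depends essentially on the Frostman bound $\mu([x,x+r])\le Cr^\alpha$: the near-diagonal region $|s-t|\le|\xi|^{-2}$ contributes $\iint_{|s-t|\le|\xi|^{-2}}d\mu\,d\mu$, and it is precisely the Frostman condition that makes this $O(|\xi|^{-2\alpha})$. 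Your proposed countable dense family of ``rationally weighted point masses on dyadic points'' consists of atomic measures, for which this diagonal term is bounded below by the sum of the squared atom weights and does not decay in $\xi$ at all; so the single-$\mu$ estimate you want to transfer is simply false for the approximating measures, and weak-$*$ convergence does not transport a Fourier decay rate from the approximants to the limit (nor conversely, since the Frostman constants of the approximants blow up). There is also a secondary issue that a Chebyshev-plus-Borel--Cantelli argument over dyadic $|\xi|$ controls only countably many frequencies, while polynomial decay of $\widehat{g_*\mu}$ must hold for all $\xi\in\mathbb{R}^2$; this requires an extra chaining or maximal-inequality step that you have not supplied. In short, the claimed ``uniform-in-$\mu$ polynomial decay of $\widehat{g_*\mu}$ on a single full-measure event'' is not established, and I do not believe it can be along these lines.

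The standard proof (as in \cite{MP}) uniformizes over sets rather than over measures, and avoids Fourier analysis entirely. The key lemma is a covering estimate: almost surely, for every $\varepsilon>0$ there is a random constant $C$ such that for every $x\in\mathbb{R}^2$ and every $0<r<1$, the time set $\{t\in[0,1]:|g(t)-x|\le r\}$ can be covered by at most $Cr^{-\varepsilon}$ intervals of length $r^2$; this is proved by a first-moment/large-deviation count of how many dyadic time intervals of length $r^2$ the path can spend near a given disc, together with a union bound over a discrete net of centers and scales. Granting this single almost-sure event, any cover of $g(A)$ by discs $D(x_i,r_i)$ pulls back to a cover of $A$ by at most $Cr_i^{-\varepsilon}$ intervals of length $r_i^2$ each, whence $\mathcal{H}^{s}_{\delta}(A)\le C\sum_i r_i^{2s-\varepsilon}$ and therefore $\dim A\le\frac12(\dim g(A)+\varepsilon)$ for \emph{every} $A$ simultaneously; letting $\varepsilon\to0$ along rationals gives the lower bound $\dim g(A)\ge 2\dim A$ with no measure-theoretic uniformization needed. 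If you want to complete your write-up, you should replace your Fourier uniformization by this covering lemma (or prove an honest uniform statement over a compact, equicontinuously Frostman family of measures, which is substantially harder).
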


The following theorem follows from \cite[Lemma 2]{H} together with the fact that the closed range of the stable subordinator with parameter
$\frac 12$ coincides with the zero set of a linear Brownian motion. For a more direct reference see \cite{K2}.

\begin{theorem} \label{t:zero} Let $A\subset [0,1]$ be a compact set with $\dim A>\frac 12$ and let $\mathcal{Z}$ be the zero set
of a linear Brownian motion. Then $\dim(A\cap \mathcal{Z})>0$ with positive probability.
\end{theorem}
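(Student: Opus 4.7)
I plan to use a second-moment / Riesz-energy argument: construct a nontrivial random measure supported on $A\cap\mathcal Z$ and bound its energy, then invoke Frostman's theorem. First, by the countable stability of Hausdorff dimension I may pass to a compact subset of $A$ contained in $[c,1]$ for some $c>0$, still having dimension exceeding $\tfrac12$; this sidesteps the small-variance issue near $0$. Then Frostman's lemma furnishes $s\in(\tfrac12,\dim A)$ and a Borel probability measure $\mu$ on this compactum with $I_s(\mu):=\iint|r-u|^{-s}\,d\mu(r)\,d\mu(u)<\infty$.

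The approximating random measures are $d\nu_\epsilon(r)=p_\epsilon(B(r))\,d\mu(r)$, where $p_\epsilon(x)=(2\pi\epsilon)^{-1/2}e^{-x^2/(2\epsilon)}$ is the centered Gaussian density of variance $\epsilon$. The first-moment identity $\mathbb E[p_\epsilon(B(r))]=(2\pi(r+\epsilon))^{-1/2}$ bounds $\mathbb E[\nu_\epsilon(A)]$ above and below by positive constants uniformly in $\epsilon\le1$ (using $r\in[c,1]$). For the second moment, evaluating the joint density of $(B(r)+\sqrt\epsilon Z_1,B(u)+\sqrt\epsilon Z_2)$ at the origin, with $Z_1,Z_2$ independent standard normals independent of $B$, gives
\[
\mathbb E\bigl[p_\epsilon(B(r))\,p_\epsilon(B(u))\bigr]=\frac{1}{2\pi\sqrt{r(u-r)+\epsilon(r+u)+\epsilon^2}}\le\frac{C_c}{\sqrt{|u-r|}}
\]
uniformly in small $\epsilon$ for $r,u\in[c,1]$. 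Consequently, for any $\gamma\in(0,s-\tfrac12)$,
\[
\mathbb E[I_\gamma(\nu_\epsilon)]\le C_c\,I_{\gamma+1/2}(\mu)<\infty.
\]

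Paley--Zygmund applied to $\nu_\epsilon(A)$, together with Markov applied to $I_\gamma(\nu_\epsilon)$, then yield: with probability bounded below uniformly in $\epsilon$, one has $\nu_\epsilon(A)\ge\delta$ and $I_\gamma(\nu_\epsilon)\le M$ simultaneously. I would extract a subsequential weak-$*$ limit $\nu$ (tightness is automatic since $\nu_\epsilon$ lives on the compact $A$). For any open $U$ with $\overline U\subset[0,1]\setminus\mathcal Z$, $|B|$ is bounded below on $\overline U$, hence $p_\epsilon(B)\to 0$ uniformly there and $\nu(U)=0$; so $\nu$ is supported on $A\cap\mathcal Z$. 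By Fatou $I_\gamma(\nu)<\infty$, and Frostman's theorem delivers $\dim(A\cap\mathcal Z)\ge\gamma>0$ on this positive-probability event. The main obstacle will be the weak-limit step: keeping the first- and second-moment controls simultaneously valid after passing to the limit, which is precisely what the open-set argument above settles.
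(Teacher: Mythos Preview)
Your argument is correct and in fact supplies more than the paper does: the paper does not prove this theorem but simply cites Hawkes (via the $\tfrac12$-stable subordinator description of $\mathcal Z$) and Kaufman for a direct argument. Your second-moment/energy construction is essentially the direct route in Kaufman's spirit, made self-contained here. Two small points worth making explicit in a final write-up: (i) to extract a weak-$*$ limit you need the total masses $\nu_\epsilon(A)$ bounded above along your subsequence, but this is automatic from $I_\gamma(\nu_\epsilon)\le M$ since $|r-u|\le 1$ forces $|r-u|^{-\gamma}\ge 1$, hence $\nu_\epsilon(A)^2\le I_\gamma(\nu_\epsilon)$; (ii) your ``Fatou'' step is really lower semicontinuity of the $\gamma$-energy under weak convergence, obtained by truncating the singular kernel to a bounded continuous one and passing to the limit. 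As a bonus, your method actually gives the sharper conclusion $\dim(A\cap\mathcal Z)\ge\gamma$ with positive probability for every $\gamma<\dim A-\tfrac12$, which is stronger than the bare $\dim(A\cap\mathcal Z)>0$ that the paper needs.
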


\begin{lemma}[Key Lemma] \label{l:key} Let $\{W(t)\colon 0\leq t\leq1\}$ be a linear Brownian motion. Assume that $\alpha>\frac 12$ and $f\colon [0,1]\to \mathbb{R}$ is a continuous function such that $(f,W)$ is almost surely dimension doubling. Then there is no set $A\subset[0,1]$ such that $\dim A>\frac12$
and $f$ is $\alpha$-H\"older continuous on $A$.
\end{lemma}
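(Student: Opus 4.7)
The plan is to argue by contradiction. Suppose there is a set $A \subset [0,1]$ with $d := \dim A > \tfrac12$ such that $f|_A$ is $\alpha$-H\"older continuous. The guiding intuition is an asymmetry of H\"older exponents: since $\alpha > \tfrac12$ while Brownian motion is only $\gamma$-H\"older for $\gamma<\tfrac12$, the map $g := (f,W)$ moves much more slowly in its first coordinate than in its second on small scales. Consequently $g(A)$ is squeezed into very eccentric rectangles, and a box-counting computation should show that $\dim g(A)$ is strictly smaller than $2d$, contradicting the dimension doubling property of $g$.

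Concretely, I would fix $\gamma \in (0, \tfrac12)$ and work on the full-measure event where $(f,W)$ is dimension doubling, $W$ is $\gamma$-H\"older on $[0,1]$ with some random constant $c_W$, and $f|_A$ is $\alpha$-H\"older with constant $c_f$. For any $s > d$ and any small $\delta>0$ I take a cover $\{U_i\}$ of $A$ with $\diam U_i \le \delta$ and $\sum_i (\diam U_i)^s = O(1)$. Each $g(A\cap U_i)$ is contained in a rectangle of sides at most $2c_f(\diam U_i)^\alpha$ and $2c_W(\diam U_i)^\gamma$; as $\alpha > \gamma$, the shorter side is the $\alpha$-side, so the rectangle can be subdivided into $O((\diam U_i)^{\gamma-\alpha})$ squares of side comparable to $(\diam U_i)^\alpha$. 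For every $t \ge 1 + (s-\gamma)/\alpha$ the exponent $\gamma - \alpha + \alpha t \ge s$, so
\[
\sum_i (\diam U_i)^{\gamma - \alpha + \alpha t} \ \le\ \sum_i (\diam U_i)^s = O(1),
\]
producing a cover of $g(A)$ by sets of diameter at most $\sqrt{2}\,\delta^\alpha$ with bounded $t$-sum. Letting $\delta \to 0$ yields $\mathcal{H}^t(g(A)) < \infty$, hence $\dim g(A) \le 1 + (s-\gamma)/\alpha$. Sending $s \downarrow d$ and $\gamma \uparrow \tfrac12$ gives $\dim g(A) \le 1 + (d-\tfrac12)/\alpha$.

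Dimension doubling simultaneously forces $\dim g(A) = 2d$, and combining these bounds gives $2d \le 1 + (d-\tfrac12)/\alpha$. Using $2\alpha-1 = 2(\alpha-\tfrac12)$ this collapses to $d \le \tfrac12$, contradicting $d > \tfrac12$. I do not expect a serious obstacle: once the H\"older exponent mismatch is exploited the core step is routine box-counting. The only thing to handle carefully is running the covering estimate at the level of Hausdorff-measure sums (rather than uniform-radius $\delta$-nets), so that the resulting bound truly controls the Hausdorff dimension of $g(A)$ and not merely its upper box dimension.
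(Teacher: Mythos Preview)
Your argument is correct, but it proceeds along a different line from the paper. The paper's proof exploits the \emph{zero set} $\mathcal{Z}$ of $W$: since $\dim A>\tfrac12$, Theorem~\ref{t:zero} gives $\dim(A\cap\mathcal{Z})>0$ with positive probability, and then $(f,W)(A\cap\mathcal{Z})=f(A\cap\mathcal{Z})\times\{0\}$ is a one-dimensional image whose dimension is at most $\tfrac{1}{\alpha}\dim(A\cap\mathcal{Z})<2\dim(A\cap\mathcal{Z})$, contradicting dimension doubling applied to $A\cap\mathcal{Z}$. You instead use only the a.s.\ $\gamma$-H\"older continuity of $W$ for $\gamma<\tfrac12$ and a direct anisotropic covering estimate to bound $\dim(f,W)(A)\le 1+(d-\tfrac12)/\alpha$, which together with $\dim(f,W)(A)=2d$ forces $d\le\tfrac12$. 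Your route is more self-contained: it avoids the Hawkes--Kaufman intersection result (Theorem~\ref{t:zero}) entirely and needs nothing about $W$ beyond L\'evy's modulus of continuity. The paper's route, by contrast, is shorter once Theorem~\ref{t:zero} is in hand and isolates a cleaner geometric mechanism (collapsing one coordinate to zero), which may be more portable to settings where the analogue of the zero-set intersection theorem is available but sharp H\"older exponents are not.
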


\begin{proof} Assume to the contrary that there is a set $A\subset [0,1]$ such that $\dim A>\frac12$
and $f$ is $\alpha$-H\"older continuous on $A$. As $f$ is still $\alpha$-H\"older
continuous on the closure of $A$, we may assume that $A$ itself is closed. Let $\mathcal{Z}$ be the zero set of $W$,
then Theorem~\ref{t:zero} implies that $\dim(A\cap \mathcal{Z})>0$ with
positive probability. Then the $\alpha$-H\"older continuity of $f|_{A}$ and Fact~\ref{f:H} imply that, with positive probability,
\begin{align*} \dim (f,W)(A\cap \mathcal{Z})&=\dim (f(A\cap \mathcal{Z})\times \{0\})=\dim f(A\cap \mathcal{Z})\\
&\leq \frac1{\alpha}\, \dim(A\cap \mathcal{Z}) < 2 \dim(A\cap \mathcal{Z}),
\end{align*}
which contradicts the fact that $(f,W)$ is almost surely dimension doubling.
\end{proof}

\begin{proof}[Proof of Theorem~\ref{t:Holder}]
Let $\{W(t)\colon 0\leq t\leq 1\}$ be a linear Brownian motion which is independent of $B$. By Kaufman's dimension doubling theorem $(B,W)$ is
dimension doubling with probability one, thus applying Lemma~\ref{l:key} for an almost sure path of $B$ finishes the proof.  \end{proof}

\section{Restrictions of bounded variation}

We need the following lemma, which may be obtained by a slight modification of \cite[Lemma~4.1]{ABPR}.
For the reader's convenience we outline the proof.

\begin{lemma} \label{l:BV}
Let $\alpha, \beta >0$. Assume that $A\subset [0,1]$ and the function $f\colon A \to \mathbb{R}$ has finite $\beta$-variation. Then there are sets
$A_n\subset A$ such that for any $n\in \mathbb{N}^+$
$$f|_{A_n} \textrm{ is $\alpha$-H\"older continuous and } \dim \left(A\setminus \bigcup_{n=1}^{\infty} A_n\right)\leq \alpha \beta.$$
\end{lemma}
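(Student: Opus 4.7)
The plan is to construct the sets $A_n$ explicitly as the level sets of the Hölder constant and then handle the complement by a Vitali covering argument that converts the $\beta$-variation bound into a Hausdorff measure estimate.

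\medskip

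First I would set
$$A_n = \{x\in A : |f(x)-f(y)| \le n |x-y|^\alpha \text{ for all } y\in A\},$$
so that $f|_{A_n}$ is $\alpha$-H\"older continuous with constant $n$ by construction. Since $A\subset[0,1]$, the finiteness of $\Var^{\beta}(f)$ forces $f$ to be bounded on $A$ (compare every point with a fixed $x_0 \in A$). Writing $B = A \setminus \bigcup_n A_n$, I need to prove $\dim B \le \alpha\beta$. For $x\in B$ and any $N\in\mathbb{N}^+$, there exists $y_x\in A$ with $|f(x)-f(y_x)| > N |x-y_x|^\alpha$; boundedness of $f$ then yields $|x-y_x|^\alpha \le 2\|f\|_\infty/N$, so these witnesses can be made uniformly small as $N$ grows.

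\medskip

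Set $r_x = |x - y_x|$ and $I_x = [x-r_x, x+r_x]$. The family $\{I_x : x\in B\}$ covers $B$ with diameters at most $2(2\|f\|_\infty/N)^{1/\alpha}$. By the standard $5r$-covering lemma I extract a disjoint subfamily $\{I_{x_i}\}$ with $B \subset \bigcup_i 5 I_{x_i}$. Then
$$\sum_i (\diam 5 I_{x_i})^{\alpha\beta} = 10^{\alpha\beta}\sum_i r_{x_i}^{\alpha\beta} \le \frac{10^{\alpha\beta}}{N^\beta}\sum_i |f(x_i)-f(y_{x_i})|^\beta.$$
The key combinatorial point is that the intervals $I_{x_i}$ are pairwise disjoint, so for each $i$ the two points $x_i, y_{x_i}$ are consecutive in the total ordering of $\bigcup_i \{x_i, y_{x_i}\}$. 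Thus the sum $\sum_i |f(x_i)-f(y_{x_i})|^\beta$ is a subsum of $\sum_{j} |f(z_{j+1})-f(z_j)|^\beta$ for the sorted list $z_0 < z_1 < \cdots$, which is bounded by $\Var^{\beta}(f)$.

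\medskip

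Combining these estimates gives
$$\mathcal{H}^{\alpha\beta}_{\,\varepsilon_N}(B) \le \frac{10^{\alpha\beta}\Var^{\beta}(f)}{N^\beta},$$
where $\varepsilon_N = 10(2\|f\|_\infty/N)^{1/\alpha} \to 0$ as $N\to\infty$. Letting $N\to\infty$ yields $\mathcal{H}^{\alpha\beta}(B)=0$, hence $\dim B \le \alpha\beta$, which is the claim.

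\medskip

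The only subtle step is the combinatorial bookkeeping in the $\beta$-variation estimate: one must be careful that the chosen witnesses $y_x$ do not create crossing pairs that would inflate the sum beyond $\Var^{\beta}(f)$. The disjointness produced by the $5r$-covering lemma is exactly what resolves this; everything else is a routine matching of scales between $N$ and $\varepsilon_N$.
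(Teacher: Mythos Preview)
Your proof is correct and follows essentially the same strategy as the paper: define $A_n$ as the set where $f$ satisfies an $\alpha$-H\"older bound, cover the residual set $B$ by intervals $[x-r_x,x+r_x]$ centered at witness points, extract a disjoint subfamily via a covering theorem, and bound $\sum r_{x_i}^{\alpha\beta}$ by $\Var^\beta(f)$. The only differences are cosmetic---you index $A_n$ by the H\"older constant rather than the scale, use the $5r$ covering lemma instead of Besicovitch, and (because of the extra $N^{-\beta}$ factor) obtain the slightly sharper conclusion $\mathcal{H}^{\alpha\beta}(B)=0$ rather than merely $<\infty$.
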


\begin{proof} For all $n\in \mathbb{N}^+$ let
$$A_n=\{x\in A: |f(x+t)-f(x)|\leq 2t^{\alpha} \textrm{ for all } t\in [0, 1/n]\cap (A-x)\}.$$
As $A$ is bounded, $f|_{A_n}$ is $\alpha$-H\"older continuous for all $n\in \mathbb{N}^+$. Let
$$D=\left\{x\in A: \limsup_{t\to 0+} |f(x+t)-f(x)|t^{-\alpha}>1\right\}.$$
Clearly $A\setminus \bigcup_{n=1}^{\infty} A_n\subset D$, so it is enough to prove that $\dim D\leq \alpha \beta$. Let us fix $\delta>0$ arbitrarily.
Then for all $x\in D$ there is a $0<t_x<\delta$ such that
\begin{equation} \label{eq:tx} |f(x+t_x)-f(x)|\geq t^{\alpha}_x.
\end{equation}
Define $I_x=[x-t_x,x+t_x]$ for all $x\in D$.
By Besicovitch's covering theorem (see \cite[Thm.~2.7]{M}) there is a number $p\in \mathbb{N}^+$ not depending on $\delta$ and countable sets
$S_i\subset D$ ($i\in \{1,\dots, p\}$) such that
\begin{equation} \label{eq:D} D\subset \bigcup_{i=1}^{p} \bigcup_{x\in S_i} I_x \textrm{ and } I_x\cap I_y=\emptyset \textrm{ for all } x,y\in S_i,~x\neq y.
\end{equation}
Applying \eqref{eq:tx} and \eqref{eq:D} implies that for all $i\in \{1,\dots,p\}$
\begin{equation} \label{eq:Var} \sum_{x\in S_i} |I_x|^{\alpha \beta} =2^{\alpha \beta} \sum_{x\in S_i}t_x^{\alpha \beta}\leq 2^{\alpha \beta} \sum_{x\in S_i} |f(x+t_x)-f(x)|^{\beta}  \leq 2^{\alpha \beta} \Var^{\beta} (f).
\end{equation}
Equations~\eqref{eq:D} and \eqref{eq:Var} imply that
$$\mathcal{H}_{\delta}^{\alpha \beta}(D)\leq \sum_{i=1}^{p} \sum_{x\in S_i} |I_x|^{\alpha \beta}\leq p2^{\alpha \beta}\Var^{\beta} (f).$$
As $\Var^{\beta} (f)<\infty$ and $\delta>0$ was arbitrary, we obtain that $\mathcal{H}^{\alpha \beta}(D)<\infty$.
Hence $\dim D\leq \alpha \beta$, and the proof is complete.
\end{proof}

\begin{proof}[Proof of Theorem~\ref{t:BV}]
Assume to the contrary that for some $\varepsilon>0$ there is a random set $A\subset [0,1]$ such that, with positive probability,
$\dim A\geq \beta/2+2\varepsilon$ and $B|_{A}$ has finite $\beta$-variation. Let $\alpha=1/2 +\varepsilon/\beta>1/2$.
Applying Lemma~\ref{l:BV} we obtain that there are sets $A_n\subset A$ such that $B|_{A_n}$ is $\alpha$-H\"older continuous for every $n\in \mathbb{N}^+$ and
\begin{equation} \label{eq:alpha}  \dim \left(A\setminus \bigcup_{n=1}^{\infty} A_n\right)\leq \alpha \beta =\frac{\beta}{2}+\varepsilon.
\end{equation}
As $\alpha>1/2$ and $B|_{A_n}$ are $\alpha$-H\"older continuous,
Theorem~\ref{t:Holder} implies that almost surely $\dim A_n\leq 1/2$ for all $n\in \mathbb{N}^+$,
therefore \eqref{eq:alpha} and the countable stability of the Hausdorff dimension yield that
$\dim A\leq \beta/2+\varepsilon$ almost surely. This is a contradiction, and the proof is complete.
\end{proof}

Theorems~\ref{t:L} and \ref{t:opt} (with $\alpha=\frac 12$) imply that Theorem~\ref{t:BV} is sharp for all $\beta$.

\begin{theorem}[L\'evy's modulus of continuity, \cite{L}, see also \cite{MP}] \label{t:L} For the linear Brownian motion $\{ B(t) \colon 0\leq t\leq 1\}$, almost surely,
$$\limsup_{h\to 0+} \sup_{0\leq t\leq 1-h} \frac{|B(t+h)-B(t)|}{\sqrt{2h \log(1/h)}}=1.$$
\end{theorem}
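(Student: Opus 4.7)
The plan is to prove the upper and lower bounds on the $\limsup$ separately. Write $\psi(h) := \sqrt{2h\log(1/h)}$ for the modulus function.

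For the upper bound I would first control dyadic increments. Fix $\varepsilon>0$. For each $n \ge 1$ and $0 \le k < 2^n$, the increment $X_{n,k} := B((k{+}1)2^{-n}) - B(k 2^{-n})$ is a centred Gaussian of variance $2^{-n}$, so the standard Gaussian tail bound gives
$$\mathbb{P}\bigl(|X_{n,k}| > (1+\varepsilon)\psi(2^{-n})\bigr) \le 2 \cdot 2^{-n(1+\varepsilon)^2}.$$
A union bound over $k$ and the first Borel--Cantelli lemma show that, almost surely, for all sufficiently large $n$, every dyadic increment at scale $2^{-n}$ has absolute value at most $(1+\varepsilon)\psi(2^{-n})$.

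Next I would chain to arbitrary pairs. Given $s < t$ with $h := t-s$ small, pick $n$ with $2^{-n-1} < h \le 2^{-n}$, set $s^+ := \lceil s\,2^n\rceil 2^{-n}$ and $t^- := \lfloor t\,2^n\rfloor 2^{-n}$, and decompose $B(t)-B(s)$ into at most one dyadic increment at scale $2^{-n}$ (bounding $B(t^-)-B(s^+)$) plus two telescoping sums of dyadic increments at finer scales (bounding $B(s^+)-B(s)$ and $B(t)-B(t^-)$). Applying the dyadic estimate at every scale gives
$$|B(t)-B(s)| \le (1+\varepsilon)\psi(2^{-n}) + 2(1+\varepsilon)\sum_{m>n}\psi(2^{-m}).$$
Because $\psi(2^{-m}) = O(2^{-m/2}\sqrt{m})$ the tail sum is negligible compared with $\psi(2^{-n}) \le (1+o(1))\psi(h)$, so the right-hand side is at most $(1+2\varepsilon)\psi(h)$ for $h$ small. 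Letting $\varepsilon \downarrow 0$ along a countable sequence yields $\limsup \le 1$ almost surely.

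For the lower bound I would exploit the independence of the $2^n$ dyadic increments $X_{n,k}$. Fix $\varepsilon \in (0,1)$. The Gaussian lower tail $\mathbb{P}(|Z|>x) \ge c\, x^{-1} e^{-x^2/2}$ yields, for a constant $c_n$ of order $n^{-1/2}$,
$$\mathbb{P}\bigl(|X_{n,k}| > (1-\varepsilon)\psi(2^{-n})\bigr) \ge c_n \cdot 2^{-n(1-\varepsilon)^2},$$
and by independence
$$\mathbb{P}\Bigl(\max_{0 \le k < 2^n} |X_{n,k}| \le (1-\varepsilon)\psi(2^{-n})\Bigr) \le \bigl(1 - c_n 2^{-n(1-\varepsilon)^2}\bigr)^{2^n},$$
which is summable in $n$ since $2^n c_n 2^{-n(1-\varepsilon)^2} \to \infty$ at geometric rate. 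Borel--Cantelli then gives that almost surely, for all sufficiently large $n$, some dyadic increment at scale $2^{-n}$ exceeds $(1-\varepsilon)\psi(2^{-n})$, whence $\limsup \ge 1-\varepsilon$; letting $\varepsilon \downarrow 0$ concludes the proof.

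The main technical obstacle is the chaining step: one must verify carefully that the sum over finer scales contributes only a lower-order term, so that the leading constant $1$ in the modulus is preserved exactly rather than inflated. The rest is routine Gaussian tail estimation together with Borel--Cantelli in both directions.
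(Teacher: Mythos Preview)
The paper does not prove Theorem~\ref{t:L}; it is quoted from \cite{L,MP} and used only to show that Theorem~\ref{t:opt} applies to almost every Brownian path. So there is no in-paper argument to compare your proposal against. Your overall two-sided Borel--Cantelli strategy is exactly the classical textbook proof, and the lower-bound half is fine as written.

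There is, however, a genuine gap in your upper-bound chaining. You assert that $\sum_{m>n}\psi(2^{-m})$ is negligible compared with $\psi(2^{-n})$, but this is false: since $\psi(2^{-m})\asymp 2^{-m/2}\sqrt{m}$, the tail sum satisfies
\[
\sum_{m>n}\psi(2^{-m}) \;\sim\; \frac{1}{\sqrt{2}-1}\,\psi(2^{-n}),
\]
which is of the \emph{same} order as the leading term, not lower order. Consequently your displayed bound yields $|B(t)-B(s)|\le C(1+\varepsilon)\psi(h)$ with a constant $C>1$ (roughly $1+2/(\sqrt{2}-1)\approx 5.8$), not $(1+2\varepsilon)\psi(h)$. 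You correctly flag this step as the main obstacle in your final paragraph, but the verification you offer does not go through.

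The standard repair is a two-scale argument: given $h\in(2^{-n-1},2^{-n}]$, round the endpoints $s,t$ to a grid of mesh $2^{-m}$ with $m=m(n)$ chosen so that $m-n\to\infty$ but $m/n\to 1$ (for instance $m=n+\lfloor\log_2 n\rfloor$). Then (i) the chaining correction from scales $\ge m$ is $O(2^{-m/2}\sqrt{m})=o(\psi(2^{-n}))$, genuinely negligible; and (ii) the main increment, over an interval with dyadic-$2^{-m}$ endpoints and length close to $h$, is controlled directly by a union bound over the roughly $2^{2m-n}$ such pairs, which is still summable because $2^{2m-n}\cdot 2^{-n(1+\varepsilon)^2}=2^{n(1-(1+\varepsilon)^2)+o(n)}$. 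This is how the sharp constant $1$ is recovered in \cite{MP}.
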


\begin{theorem}\label{t:opt} Let $0<\alpha\leq 1$ and $0<\beta \leq \frac {1}{\alpha}$ be fixed. Then there is a compact set $A\subset [0,1]$ such that $\dim A=\alpha \beta$ and if $f\colon [0,1]\to \mathbb{R}$ is a function and $c\in (0,\infty)$ such that for all $x,y\in [0,1]$
\begin{equation} \label{eq:mod} |f(x)-f(y)|\leq c|x-y|^{\alpha}\log \frac{1}{|x-y|}, \end{equation}
then $f|_{A}$ has finite $\beta$-variation.
\end{theorem}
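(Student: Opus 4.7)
The plan is to exhibit a compact $A\subset[0,1]$ of Hausdorff dimension $s:=\alpha\beta$ whose ``log-corrected $s$-variation'' is finite, i.e.\
\[
V_A\ :=\ \sup\sum_{i}(x_i-x_{i-1})^{s}\bigl|\log(x_i-x_{i-1})\bigr|^{\beta}\ <\ \infty,
\]
the supremum being over finite increasing sequences in $A$. Raising \eqref{eq:mod} to the $\beta$th power and summing then gives $\Var^{\beta}(f|_{A})\le c^{\beta}V_A<\infty$, proving the theorem. To build such an $A$, I would fix $\gamma>(\beta+1)/s$ and set $A=\bigcap_{n\ge 0}E_n$, where $E_n$ is a disjoint union of $N_n:=2^n$ closed intervals of common length $\ell_n:=2^{-n/s}n^{-\gamma}$, nested so that each interval of $E_{n-1}$ contains two intervals of $E_n$ placed at its endpoints; the ratio $\ell_{n-1}/(2\ell_n)=2^{1/s-1}(n/(n-1))^{\gamma}$ exceeds~$1$, which makes the nesting possible and leaves a definite gap at each level.

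For the Hausdorff dimension, the upper bound $\dim A\le s$ follows from the cover of $A$ by the $N_n$ intervals of $E_n$, since $\log N_n/\log(1/\ell_n)\to s$. For the lower bound, equip $A$ with the natural measure $\mu$ assigning mass $2^{-n}=\ell_n^{s}\cdot n^{\gamma s}$ to every level-$n$ interval. Because $n^{\gamma s}$ grows only polylogarithmically in $1/\ell_n$, one checks at dyadic scales that $\mu(B(x,r))\le Cr^{s'}$ for every $s'<s$ and small $r$, whence Frostman's lemma yields $\dim A\ge s'$ for every $s'<s$.

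For the variation bound, fix a finite increasing sequence $x_0<\dots<x_N$ in $A$ and, for each $i$, let $n_i$ be the largest $n$ such that $x_{i-1},x_i$ lie in a common interval of $E_n$. Because the monotone sequence visits the two level-$(n+1)$ children of each level-$n$ interval in order, at most one index per level-$n$ interval can have $n_i=n$, so $\#\{i:n_i=n\}\le N_n=2^n$. The function $g(t):=t^{s}|\log t|^{\beta}$ is increasing on $(0,e^{-\beta/s})$ and $x_i-x_{i-1}\le\ell_{n_i}$, so $g(x_i-x_{i-1})\le g(\ell_{n_i})$ for all $n_i$ with $\ell_{n_i}$ in that range; the finitely many pairs with small $n_i$ contribute $O(1)$. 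Hence
\[
\sum_{i}g(x_i-x_{i-1})\ \le\ O(1)\ +\ \sum_{n}N_n\,g(\ell_n)\ \asymp\ \sum_{n}n^{\beta-\gamma s}\ <\ \infty,
\]
by the choice of $\gamma$.

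The main technical obstacle is calibrating the scales $\ell_n$: a self-similar Cantor set of dimension $s$ would have $N_n\ell_n^{s}\asymp 1$ and contribute order $n^{\beta}$ per level to $V_A$, which diverges. The sub-geometric correction factor $n^{-\gamma}$ makes $\mathcal{H}^{s}(A)=0$ yet preserves $\dim A=s$, and is precisely what is needed to absorb the logarithmic enhancement in~\eqref{eq:mod}.
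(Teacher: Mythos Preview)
Your construction and argument are essentially the same as the paper's: both build a dyadic Cantor set with $2^n$ level-$n$ intervals of length $2^{-n/(\alpha\beta)}$ times a negative power of $n$, and both bound the $\beta$-variation by observing that a monotone sequence in $A$ has at most $2^n$ consecutive pairs that split at level~$n$. The only cosmetic differences are that you leave the polynomial exponent $\gamma$ as a free parameter subject to $\gamma s>\beta+1$ (the paper fixes a specific value), you package the reduction neatly via the quantity $V_A$, and for the dimension lower bound you invoke Frostman's lemma where the paper cites the gap formula from Falconer's \emph{Fractal Geometry}.
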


\begin{proof} First we construct $A$. For all $n\in \mathbb{N}$ let
$$\gamma_n=2^{-n/(\alpha \beta)}(n+1)^{-(\beta+2)/\beta}.$$
We define intervals $I_{i_1\dots i_n}\subset [0,1]$ for all $n\in \mathbb{N}$ and $\{i_1,\dots, i_n\}\in \{0,1\}^{n}$  by induction.
We use the convention $\{0,1\}^{0}=\{\emptyset\}$. Let $I_{\emptyset}=[0,1]$, and if the interval
$I_{i_1\dots i_n}=[u,v]$ is already defined then let
$$I_{i_1\dots i_n 0}=[u,u+\gamma_{n+1}] \textrm{ and } I_{i_1\dots i_n 1}=[v-\gamma_{n+1},v].$$
Let
$$A=\bigcap_{n=0}^{\infty} \bigcup_{(i_1,\dots,i_n) \in \{0,1\}^n} I_{i_1\dots i_n}.$$
Assume that $f\colon [0,1]\to \mathbb{R}$ is a function satisfying \eqref{eq:mod}. Now we prove that $\Var^{\beta} (f|_{A}) <\infty$. As $\diam I_{i_1\dots i_n}=\gamma_n$,
the definition of $\gamma_n$ and \eqref{eq:mod} imply that for all $n\in \mathbb{N}$ and $(i_1,\dots, i_n)\in \{0,1\}^n$ we have
\begin{equation} \label{eq:beta}
(\diam f(I_{i_1\dots i_n}))^\beta \leq (c \gamma_n^{\alpha} \log \gamma_n^{-1})^{\beta}\leq  c_{\alpha,\beta}2^{-n}(n+1)^{-2},
\end{equation}
where $c_{\alpha,\beta}\in (0,\infty)$ is a constant depending on $\alpha,\beta$ and $c$ only. For all
$x,y\in A$ let $n(x,y)$ be the maximal number $n$ such that $x,y\in I_{i_1\dots i_n}$ for some $(i_1,\dots,i_n)\in \{0,1\}^n$. If $\{x_i\}_{i=0}^{k}$ is a monotone sequence in $A$ and $n\in \mathbb{N}$
then the number of $i\in \{1,\dots,k\}$ such that $n(x_{i-1},x_{i})=n$ is at most $2^n$. Therefore \eqref{eq:beta} implies that
$$\Var^{\beta} (f|_{A}) \leq \sum_{n=0}^{\infty} 2^{n} \left(c_{\alpha,\beta} 2^{-n}(n+1)^{-2}\right)=\sum_{n=1}^{\infty} c_{\alpha,\beta} n^{-2}<\infty.$$
Finally, we prove that $\dim A=\alpha \beta$. The upper bound $\dim A\leq \alpha \beta$ is obvious, thus we show only the lower bound. In the construction of $A$ each $(n-1)$st level interval $I_{i_1\dots i_{n-1}}$ contains $m_n=2$ $n$th level intervals $I_{i_1\dots i_{n-1}i}$, which are separated by gaps of $\varepsilon_n=\gamma_{n-1}-2\gamma_n$. The definition of $\gamma_n$ yields that $0<\varepsilon_{n+1}<\varepsilon_n$ for all $n\in \mathbb{N}^+$
and $\varepsilon_n=2^{-n/(\alpha \beta)+o(n)}$. Applying \cite[Example~4.6]{F} we obtain that
$$\dim A\geq \liminf_{n\to \infty} \frac{\log(m_1\cdots m_{n-1})}{-\log(m_n \varepsilon_n)}=\alpha \beta,$$
and the proof is complete.
\end{proof}

\section*{Acknowledgment} The first author was supported by the
Hungarian Scientific Research Fund grant no.~104178. We thank Russell Lyons and Nicolas Curien for help with the abstract.

\end{document}